\definecolor{pgray}{RGB}{75,92,113}
\definecolor{porange}{RGB}{230,148,20}
\definecolor{pblue}{RGB}{104,140,152}
\newcommand{\colorlist}[1]{%
\ifcase#1 % 0
Blue%
\or% 1
ForestGreen%
\or% 2
Fuchsia%
\or% 3
Turquoise%
\or% 4
BurntOrange%
\or% 5
WildStrawberry%
\or% 6
LimeGreen%
\else% anything else
Orchid%
\fi
}
\theoremstyle{plain} % text is in italics
\newtheorem{theorem}{Theorem}[section]
\newtheorem{proposition}[theorem]{Proposition}
\theoremstyle{definition} % text is not in italics
\newcounter{suggestcount}
\newcounter{commentlabel}
\newlength{\poincarecommentlift}
\DeclareRobustCommand{\COMMENT}[1]{\@bsphack%
	\stepcounter{commentlabel}%
	\vbox to0pt{%
		\setlength{\fboxrule}{0.75pt}%
		\setlength{\fboxsep}{0.75pt}%
		\setlength{\poincarecommentlift}{1ex}%
		\addtolength{\poincarecommentlift}{\fboxrule}%
		\addtolength{\poincarecommentlift}{\fboxsep}%
		\vss\color{red}%
		\rlap{\rlap{\vrule height\poincarecommentlift width\fboxrule}\raise \poincarecommentlift%
		\hbox{\fcolorbox{red}{yellow}{%
\normalfont\footnotesize\ttfamily\bfseries\thecommentlabel}}}}%
	\marginpar{\noindent\raggedright%
	\textbf{\color{red}\thecommentlabel}:\thinspace\footnotesize#1}%
}
\title{ Successive extensions  of vector bundles on curves}
\author{montserrat teixidor i bigas}
\address{ Mathematics Department, Tufts University, Medford MA
02155, U.S.A.}
\let\oldtocsection=\tocsection
\let\oldtocsubsection=\tocsubsection
\renewcommand{\tocsection}[2]{\hspace{0em}\oldtocsection{#1}{#2}}
\renewcommand{\tocsubsection}[2]{\hspace{1.1em}\oldtocsubsection{#1}{#2}}
\begin{document} 

\maketitle

\begin{abstract} We show that on a generic curve,  a bundle obtained by successive extensions is stable.
We compute the dimension of the set of such extensions. We use degeneration methods specializing the curve to a chain of elliptic components
\end{abstract}

Take a generic (compact, non-singular) curve $C$ of genus $g$ defined over the complex numbers. 
A vector bundle $E$ on $C$ of rank $r$ and degree $d$ is said to be stable if for every vector subbundle $E_1$ of $E$ of rank $r_1$ and degree $d_1$, 
the inequality $\frac {d_1}{r_1}<\frac dr$ is satisfied.
The moduli space ${\mathcal U}(r,d)(C)$ parametrizes isomorphism classes of stable  vector bundles of rank $r$ and degree $d$ on $C$.
It is a non-singular variety of dimension $r^2(g-1)+1$.
%Its compactification   $\overline{{\mathcal U}(r,d)(C)}$ is the set of  equivalence classes of semistable vector bundles.

Fix now  $E\in {\mathcal U}(r,d)(C)$ and an integer  $r_1<r$.
Define  $s_{r_1}(E)=r_1d-rmax\{deg E_1\}$ where $E_1$ moves in the set of subbundles of $E$ of rank $r_1$.
As, $E$ is stable  $s_{r_1}(E)> 0$ for all $r_1$. 
On the other hand,  for a generic $E$,   $s_{r_1}(E)$ is the smallest integer greater than or equal to $ r_1(r-r_1)(g-1)$ and congruent with $r_1d$ modulo $r$ (\cite{L} Satz p.448,\cite {LN}).
 Fix then $s$ such that $0<s\le  r_1(r-r_1)(g-1)$. The (proper) subset of the moduli space of vector bundles given as 
\[ {\mathcal U}^s(r,d)(C)= \{ E\in {\mathcal U}(r,d)(C)\text{ such that } s_{r_1}(E)=s \}  \]
 generically coincides with the space of stable bundles $ E_{r,d}$ that fit in an exact sequence
\[ 0\to E_{r_1, d_1}\to E_{r,d}\to \bar{E}_{r-r_1, d-d_1}\to 0\]
 Lange conjectured that a generic choice of the two bundles $E_{r_1, d_1},  \bar{E}_{r-r_1, d-d_1}$ together with a generic choice of the extension would give rise to a stable $E_{r,d}$.
 This would prove that ${\mathcal U}^s(r,d)(C)$ is non-empty and of the expected dimension.
 Partial results were obtained in \cite{LN}, \cite{BR}, \cite{BL} \cite{extlb} \cite{langred} among others and  the conjecture was proved in full generality in \cite{RT}.
 
 Our goal here is to generalize this result to the case of several  successive extensions. We show  the following
 
\begin{theorem}\label{mainth} Let $C$ be a generic curve of genus $g$.
 Fix a rank $r$ and degree $d$.
 Choose a collection of integers $r_1<r_2<\dots<r_k=r$ and degrees $d_1,\dots, d_k=d$ such that 
 \[ \frac {d_1}{r_1}< \frac {d_2}{r_2}<\dots<\frac {d_k}{r_k}\]
 and 
 \[ r_1d_2-r_2d_1\le r_1(r_2-r_1)(g-1),\  r_2d_3-r_3d_2\le r_2(r_3-r_2)(g-1),\dots, r_{k-1}d_k-r_kd_{k-1}\le r_{k-1}(r_k-r_{k-1})(g-1)\]
 Define $ {\mathcal U}(r_1,\dots ,r_k;d_1,\dots, d_k)(C)\subseteq  {\mathcal U}(r,d)(C)$
 as the set  of  stable $E_{r,d}$ obtained after a sequence of extensions 
 \[ 0\to E_{r_1, d_1}\to E_{r_2,d_2}\to \bar{E}_{r_2-r_1, d_2-d_1}\to 0\]
 \[ 0\to E_{r_2, d_2}\to E_{r_3,d_3}\to \bar{E}_{r_3-r_2, d_3-d_2}\to 0\]
\[ \dots\]
 \[ 0\to E_{r_{k-1}, d_{k-1}}\to E_{r,d}\to \bar{E}_{r-r_{k-1}, d-d_{k-1}}\to 0\]
 Then,  $ {\mathcal U}(r_1,\dots ,r_k;d_1,\dots, d_k)(C)$ is non-empty, irreducible and whose codimension  in $ {\mathcal U}(r,d)(C)$ is
 \[  [ r_1(r_2-r_1)+r_2(r_3-r_2)+\dots+r_k(r_k-r_{k-1})](g-1)-[r_1d_2-r_2d_1+r_2d_3-r_3d_2+\dots +r_{k-1}d_k-r_kd_{k-1} ] \]
 \end{theorem}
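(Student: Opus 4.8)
The plan is to argue by induction on $k$, specializing $C$ to a chain $C_0=Y_1\cup\cdots\cup Y_g$ of elliptic curves meeting in $g-1$ nodes, just as in the proof of the case $k=2$ in \cite{RT}; that case, together with the trivial case $k=1$ (where the stratum is all of $\mathcal U(r,d)(C)$), is the base of the induction. So assume $k\ge 3$ and that the statement holds for $k-1$. The observation that makes the inductive step run smoothly is that, since $\mathcal U(r_1,\dots,r_{k-1};d_1,\dots,d_{k-1})(C)$ is non-empty and irreducible by the inductive hypothesis and stability is an open condition in a family of bundles of fixed rank and degree, the generic bundle $F$ produced by the first $k-2$ extensions is already stable of rank $r_{k-1}$ and degree $d_{k-1}$.

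First I would package the last extension into a parameter space $\mathcal W$ of triples $(F,\bar E,e)$, where $F$ runs over the generic locus of bundles of type $(r_1,\dots,r_{k-1};d_1,\dots,d_{k-1})$, $\bar E$ over the generic stable bundles of rank $r_k-r_{k-1}$ and degree $d_k-d_{k-1}$, and $e\in\mathbb P\,\mathrm{Ext}^1(\bar E,F)$, with associated middle term $E$. Since $F$ is stable with slope $d_{k-1}/r_{k-1}<d_k/r_k<(d_k-d_{k-1})/(r_k-r_{k-1})$, a nonzero homomorphism $\bar E\to F$ would have image simultaneously a quotient of the stable bundle $\bar E$ and a proper subsheaf of the stable bundle $F$, which is impossible; hence $\mathrm{Hom}(\bar E,F)=0$ and by Riemann--Roch
\[ \mathrm{ext}^1(\bar E,F)=r_{k-1}(r_k-r_{k-1})(g-1)+(r_{k-1}d_k-r_kd_{k-1}), \]
which is then constant over a dense open of the base. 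Consequently $\mathcal W$ is irreducible of dimension $\dim\mathcal U(r_1,\dots,r_{k-1};d_1,\dots,d_{k-1})(C)+(r_k-r_{k-1})^2(g-1)+\mathrm{ext}^1(\bar E,F)$. The natural map $\mathcal W\to\mathcal U(r,d)(C)$ sending $(F,\bar E,e)$ to $E$ has, over the open locus where $E$ is stable, image exactly $\mathcal U(r_1,\dots,r_k;d_1,\dots,d_k)(C)$, and its fibre over such an $E$ is the set of sub-bundles of $E$ of rank $r_{k-1}$ and degree $d_{k-1}$ admitting a flag of the required type. It therefore suffices to prove two things: that the locus in $\mathcal W$ where $E$ is stable is non-empty, and that for generic $E$ in the image one has $s_{r_{k-1}}(E)=r_{k-1}d_k-r_kd_{k-1}$ with the corresponding maximal sub-bundle unique. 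Granting these, the fibre is finite, so the stratum has dimension equal to $\dim\mathcal W$; substituting the inductive formula for $\dim\mathcal U(r_1,\dots,r_{k-1};\dots)(C)$ and simplifying the quadratic terms yields the asserted codimension, and irreducibility of $\mathcal W$ yields irreducibility of the stratum.

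To establish the two remaining points I would specialize $C$ to the elliptic chain $C_0$ in a one-parameter family and work in the relative moduli space, as in \cite{RT}. A bundle on $C_0$ is a collection of bundles on the $Y_j$ together with gluings at the nodes, and on an elliptic curve the Atiyah classification makes all $\mathrm{Hom}$ and $\mathrm{Ext}^1$ among indecomposable summands explicit. Applying the inductive hypothesis relatively, I would begin from the limit $F_0$ of a generic bundle of type $(r_1,\dots,r_{k-1};d_1,\dots,d_{k-1})$, realize the limit $\bar E_0$ of a generic stable bundle of rank $r_k-r_{k-1}$ and degree $d_k-d_{k-1}$ as a generic sum of indecomposable bundles of coprime rank and degree suitably distributed among the $Y_j$, choose a generic gluing/extension class, and let $E_0$ be the resulting bundle on $C_0$. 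One then checks that $E_0$ together with its flag smooths to the nearby smooth curve, and that a generic such $E_0$ admits no sub-bundle of rank $\rho$ and degree greater than $(\rho d-s_\rho)/r$ for any $\rho$, where $s_\rho$ is the value imposed by the flag; in particular $E_0$ is stable. Semicontinuity then carries these conclusions to the general curve, which is precisely what was needed.

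The main obstacle is this last step: the combinatorial and cohomological analysis on $C_0$ of all potential sub-bundles of the limit bundle $E_0$. For every intermediate rank $\rho$ and every degree exceeding the Segre bound one must rule out the existence of such a sub-bundle for a generic choice of the limit data, keeping track at once of how the sub-bundle meets the distinguished flag $F_0\subset E_0$ (the ranks $\rho<r_{k-1}$ being controlled by the inductive description of sub-bundles of $F_0$, the ranks $\rho\ge r_{k-1}$ by the interaction with $\bar E_0$) and of the elliptic building blocks on each component, and then verify that the dimension counts on $C_0$ agree with the expected ones so that no component of the stratum is gained or lost in the degeneration. Everything else — the Riemann--Roch bookkeeping, the irreducibility, and the reduction to the elliptic chain — is routine once the base case \cite{RT} and the inductive hypothesis are in hand.
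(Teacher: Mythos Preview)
Your outline shares the paper's two essential moves---the Riemann--Roch/irreducibility bookkeeping for the extension parameter space (your $\mathcal W$ is the paper's Proposition~\ref{propIrrFamExt}) and the degeneration to a chain of elliptic curves---but the paper organizes the argument differently, and that difference dissolves what you call ``the main obstacle.''

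Rather than induct on $k$ and, at each step, analyze on $C_0$ all subbundles of every rank $\rho$ of the limit of a bundle already built by $k-2$ extensions, the paper constructs in one shot an explicit bundle $E$ on the elliptic chain together with its entire flag $E_{r_1,d_1}\subset\cdots\subset E_{r_k,d_k}=E$ (Proposition~\ref{propredcur}). On $C_1$ one takes generic direct sums of indecomposables of the required ranks and degrees, on $C_2,\dots,C_g$ direct sums of degree-$0$ line bundles, and generic gluings at the nodes that match up the chosen sub-summands. The point is that on a tree-like curve a bundle glued from semistable pieces is stable as soon as the (finitely many) destabilizing subsheaves on each component fail to glue with one another; here those are just partial direct sums of the indecomposable summands on $C_1$, and a generic gluing avoids them. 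So the stability check is an appeal to \cite{modtree}, \cite{modredcur}, not a rank-by-rank exclusion of subbundles. The paper also carries this out only for the tightest possible degrees $d_i$ and then deduces the general case by the monotonicity of strata (\cite{RT}, Corollary~1.12; Proposition~\ref{propgencuranydeg}).

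For finiteness of the fibre of your map $\mathcal W\to\mathcal U(r,d)$ you ask for more than is needed: uniqueness of the maximal subbundle is not required (and need not hold), only finiteness of flags, and the paper obtains this on $C_0$ from a dimension estimate on the first elliptic component (Proposition~2.8 of \cite{langred}), then upper semicontinuity of fibre dimension. Your inductive scheme could be made to work, but it forces you to understand the limit on $C_0$ of a generic member of the $(k-1)$-stratum---information the inductive hypothesis on the generic curve does not directly supply---whereas the direct construction sidesteps this entirely.
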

Following the ideas in \cite{extlb}, \cite{langred}. we will use a degeneration argument.
We first   prove the result for a particular reducible nodal curve and then we extend it to the generic curve.
%An argument of Ballico shows that this is sufficient to guarantee that it works  for all curves.

We became interested in this question while studying families of rational curves in the moduli space of vector bundles on a fixed curve.
The stability of a successive extension is crucial for the construction of families of rational curves in the moduli space.

The result in the case of a single extension has found applications to a variety of other topics, in particular to the study of Brill-Noether theory for vector bundles 
and to the existence of Ulrich bundles on ruled surfaces. 
We expect that this more general result will find similar applications.

%%%%%%%%%%%%%%%%%%%%%%%%%%%%%%%%%%%%%%%%%%%%%%%%%%%%%%%%%%%%%%%%%%%%%%%%%%%%%%%%%%%%%%%%%%%
\section{The problem on the reducible curve}

Tensoring with  line bundles gives isomorphisms between ${\mathcal U}(r,d)(C)$ and ${\mathcal U}(r,d+tr)(C)$, so there are only, up to isomorphism, 
 at most $r$ non-isomorphic moduli spaces of vector bundles on a curve (in fact, about half of that if you consider also dualization, but this is irrelevant to us now).
 Without loss of generality, we will assume that $0\le d<r$.
 
We will be using bundles on reducible, nodal curves  as limits of vector bundles on non-singular curves. 
More specifically, we will consider chains of elliptic curves defined as follows:
Let $C_1,\dots, C_g$ be $g$ elliptic curves, $P_i,\not= Q_i$ arbitrary points on $C_i$.
Glue $Q_i$ to $P_{i+1}, i=1,\dots, g-1$ to form a nodal curve $C$ of genus $g$.

On a reducible curve, stability for a bundle depends on a choice of polarization.
A polarization is usually defined as the choice of a line bundle on the variety.
For our goal of defining stability of a vector bundle, what matters is the relative degree of the restriction of this line bundle to each component, that is the numbers 
\[ a_i, i=1,\dots, g, 0<a_i<1,\ \sum a_i=1\]
where each $a_i$ can be interpreted as $a_i=\frac{\deg L_{|C_i}}{\deg L}$.
Then, a vector bundle $E$ on $C$ of constant rank $r$ is said to be $a_i$-stable if for every subsheaf $F$ of $E$
\[ \frac {\chi (F)}{\sum a_i\text{rank} (F_{|C_i})}<\frac{\chi(E)}r  . \]
Fixing a polarization, there is a moduli space of vector bundles on  the chain of elliptic curves (see \cite{S}).
The polarization forces the degree of the restriction on each component to  vary in certain intervals.
In fact, the moduli space is reducible, each component corresponding to a choice of allowable degrees on the components (see \cite{modtree}, \cite{modredcur}).
Our goal is to use results from  the chain of elliptic curves to deduce similar conditions for  non-singular curves. 
When dealing with  a family of curves in which the general member is generic and the special  member is the chain of elliptic curves,
we can modify a vector bundle in the family tensoring with a line bundle with support on the special fiber.
This action leaves the vector bundle on the general fiber unchanged but moves the degree on the various components of the special fiber by multiples of the rank. 
This allows us  to choose the distribution of degrees among the components up to multiples of $r$ 
and  ignore the restriction imposed by the polarization on degrees of restriction on components.
We can focus then on the remaining conditions needed for stability.

We will show the following

\begin{proposition}\label{propredcur}
Let $C$ be a chain of elliptic curves of genus $g$.
 Fix a rank $r$ and degree $d, 0\le d<r$ and a collection of integers $r_1<r_2<\dots<r_k=r$. 
 Choose  degrees $d_1,\dots, d_k=d$ with $d_{k-1}$  the largest degree such that $\frac {d_{k-1}}{r_{k-1}}<\frac {d_k}{r_k}$,
  $d_{k-2}$  the largest degree such that $\frac {d_{k-2}}{r_{k-2}}<\frac {d_{k-1}}{r_{k-1}}$,\dots, 
   $d_{1}$  the largest degree such that $\frac {d_{1}}{r_{1}}<\frac {d_2}{r_2}$.
 Then,  there exists a stable bundle $E$  that contain a chain of subbundles 
 \[ E_{r_1,d_1}\subseteq  E_{r_2,d_2}\subseteq \dots \subseteq  E_{r_k,d_k}=E\]
  with $E_{r_i,d_i}$ stable of degree $d_i$ and rank $r_i$.. Moreover, this $E$ contains at most a finite number of such chains.
 \end{proposition}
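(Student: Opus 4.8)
The plan is to prove Proposition~\ref{propredcur} by an explicit construction on the chain $C=C_1\cup\dots\cup C_g$: build $E$ together with the flag $E_{r_1,d_1}\subset\cdots\subset E_{r_k,d_k}=E$ by gluing bundles on the elliptic components, and then read off both the stability of $E$ and the finiteness of such flags from the component data. Throughout I would use Atiyah's classification of vector bundles on an elliptic curve, the fact that semistable bundles there are $S$-equivalent to direct sums of stable bundles of a single slope (so their moduli are rigid in the relevant sense), and the freedom, noted in the text, to shift the degree of a bundle on the chain over its components by multiples of the rank by twisting with line bundles supported on the special fibres.

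For the construction, on each component $C_i$ I would take $E|_{C_i}$ to be a direct sum of stable bundles of a common slope (with mild instabilities allowed on a controlled set of ``defect'' components, discussed below), equipped with a full flag $E^{(i)}_1\subset\cdots\subset E^{(i)}_k$ of subbundles of ranks $r_1<\cdots<r_k$, and I would choose the degrees $\deg E^{(i)}_j$ subject to $\sum_i\deg E^{(i)}_j=d_j$. The content of the hypotheses $0<r_jd_{j+1}-r_{j+1}d_j\le r_j(r_{j+1}-r_j)(g-1)$ is precisely that the total ``slope defect'' of each step of the flag can be spread over the $g$ elliptic components one elementary unit at a time, leaving all but finitely many components perfectly balanced; the extremality of the chosen $d_i$ makes this distribution essentially unique. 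I would then glue the $E|_{C_i}$ along the nodes $Q_i\sim P_{i+1}$ by isomorphisms of the fibres chosen generically among those carrying the flag at $Q_i$ to the flag at $P_{i+1}$. After a final twist by a line bundle on the special fibres this produces $E$ of rank $r$ and degree $d$ with the required flag, and each $E_{r_j,d_j}$ is stable because it too is a generic gluing of slope-balanced pieces.

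It then remains to check that $E$ is stable for a suitable polarization and that $E$ carries only finitely many flags of the prescribed numerical type. Stability is the standard argument for bundles on a chain of elliptic curves: a destabilizing subsheaf $F\subset E$ restricts on each $C_i$ to a subsheaf of slope at least that of $E|_{C_i}$, hence by semistability to a subbundle of the same slope built out of the stable summands, and the generality of the gluing forces the ranks of the two restrictions at each node to agree; counting Euler characteristics against the polarization then forces $F$ to be $0$ or all of $E$, the defect components contributing only bounded corrections. For finiteness, I would observe that the scheme of flags of the given numerical type inside a fixed rank-$r$ degree-$d$ bundle has expected dimension $\sum_{j=1}^{k-1}\big[(r_jd_{j+1}-r_{j+1}d_j)-r_j(r_{j+1}-r_j)(g-1)\big]\le 0$, nonempty for our $E$ by the construction, and then show that for our $E$ it actually attains this expected dimension and so is a finite set: on the chain this is a component-by-component verification, since the restriction of any such flag to $C_i$ is a flag of subsheaves of the rigid bundle $E|_{C_i}$ whose deformations and obstructions are computed from Atiyah's description, with the $g-1$ gluing conditions at the nodes cutting the component contributions down to the expected total.

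The main obstacle, and the point where the construction must be chosen with care, is reconciling the last two requirements on the defect components: the local instabilities introduced there to realize an unbalanced flag must be hidden by the gluing so as not to destroy the stability of $E$, while at the same time the configuration on each component must stay rigid enough that no positive-dimensional family of extremal subsheaves appears so as not to destroy finiteness. Making the two compatible is exactly where the inequalities $r_jd_{j+1}-r_{j+1}d_j\le r_j(r_{j+1}-r_j)(g-1)$ are used, and I would organize the verification as an induction on $k$ that peels off the outermost extension $0\to E_{r_{k-1},d_{k-1}}\to E\to\bar E_{r-r_{k-1},d-d_{k-1}}\to 0$ and treats it by the single-extension analysis of \cite{extlb}, \cite{langred}, \cite{RT}, carrying the inner flag through the induction.
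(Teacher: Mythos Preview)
Your plan diverges from the paper's in a basic structural way, and in doing so imports difficulties that the paper's construction sidesteps entirely.

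The paper does not spread the degree over the chain with ``defect components.'' It concentrates \emph{all} of the degree on the first elliptic component: on $C_1$ it takes a generic bundle $E^1_{r,d}$ (a direct sum of $h=\gcd(r,d)$ indecomposable bundles of coprime rank and degree) together with a nested family of generic subbundles $E^1_{r_1,d_1}\subset\cdots\subset E^1_{r_k,d_k}=E^1_{r,d}$, whose existence follows from the slope inequalities and Atiyah's theory; on every other $C_i$ it takes a direct sum of $r$ degree-$0$ line bundles with a fixed nested family of sub-sums of sizes $r_1<\cdots<r_k$. The flags are then glued at the nodes generically subject to preserving the flag. Stability of $E$ is immediate: the only candidate destabilizing subsheaves come from partial direct sums of the $h$ stable summands on $C_1$, and the choice of inclusions on $C_1$ together with the generic gluing kills them. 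There are no defect components to balance and no delicate reconciliation of ``hidden instabilities'' with rigidity; the obstacle you identify in your last paragraph simply does not arise.

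You have also misread the hypotheses. In this proposition the $d_i$ are not arbitrary degrees subject to $r_jd_{j+1}-r_{j+1}d_j\le r_j(r_{j+1}-r_j)(g-1)$; they are chosen \emph{extremally}, so that in fact $r_id_{i+1}-r_{i+1}d_i\le r_{i+1}-1\le r_i(r_{i+1}-r_i)$, with no factor of $g-1$ at all. This is exactly what makes the finiteness claim go through on the single elliptic curve $C_1$: the paper invokes Proposition~2.8 of \cite{langred} to conclude that, for a fixed $r_i$-dimensional subspace of the fibre of $E^1_{r_{i+1},d_{i+1}}$ at $Q_1$, only finitely many subbundles of type $(r_i,d_i)$ land in that subspace. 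Since on $C_2,\dots,C_g$ the flag is the fixed direct-sum flag, finiteness of chains in $E$ follows at once. Your expected-dimension argument and ``component-by-component verification'' never reach this level of concreteness, and the inductive peeling via \cite{extlb}, \cite{langred}, \cite{RT} that you append at the end is a separate strategy rather than the mechanism actually driving your construction. The passage from the extremal degrees here to the general inequalities of Theorem~\ref{mainth} is handled later, on the non-singular curve, via the semicontinuity step of Proposition~\ref{propgencuranydeg}.
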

 \begin{proof}  On the moduli space of vector bundles on the chain of elliptic curves, we focus on  the component whose restriction 
 have degree $d<r$ on $C_1$ and degree zero on the remaining components.
Write $h$ for the greatest common divisor of $d, r$. Define $d', r'$ by  $d=hd', r=hr'$.
On the chosen component of the moduli space of vector bundles, the generic vector bundle restricts to a direct sum of $h$ indecomposable bundles of degree $d'$ and rank $r'$ on $C_1$
and as a direct sum of line bundles of degree 0 on $C_2, \dots, C_g$.

More generally, for any $r_i, d_i$, we will say that  $E_{r_i,d_i}$ is a generic vector bundle of degree $d_i$ and rank $r_i$ if it is a direct sum of $h_i$ indecomposable vector bundles of 
coprime rank and degree
\[ E_{r_i,d_i}=\oplus _{j=1}^{h_i} F^j_i, \  \ h_i=gcd(r_i, d_i), \ r_i=h_ir'_i, \ d_i=h_id'_i, \ \deg F_i=r'_i, \ rkF_i=r'_i.\]

On $C_1$, choose  a generic vector bundle $E^1_{r_1,d_1}$ of degree $d_1$ and rank $r_1$,  a generic vector bundle $E^1_{r_2,d_2}$ of degree $d_2$ and rank $r_2$, 
\dots,  a generic vector bundle $E^1_{r_k,d_k}$ of degree $ d_k$ and rank $r_k$.

The conditions  $ \frac {d_1}{r_1}< \frac {d_2}{r_2}<\dots<\frac {d_k}{r_k}$ guarantee (see \cite{langred}, Lemma 2.5) that there exist inclusions 
\[ E^1_{r_1,d_1}\subseteq  E^1_{r_2,d_2}\subseteq \dots \subseteq  E^1_{r_k,d_k}\]
In fact, as $E^1_{r_i,d_i}=\oplus _{j=1}^{h_i} F^j_i,  E^1_{r,d}=E_{r_k,d_k}=\oplus _{j=1}^{h} F^j_k$, 
$Hom (E^1_{r_i,d_i}, E^1)=\oplus _{j, j'}Hom (F^j_i,F^{j'}_k)$.
Then,   from 
\[ \frac {d'_{i}}{r'_{i}}=\frac {d_{i}}{r_{i}}<\frac {d}{r}=\frac {d'}{r'},\]
  the space of morphisms of $F^j_i$ to $ F^{j'}_k$ has dimension $r'_id'-r'd_i'$.
Therefore,  the space of morphisms of $E^1_{r_i,d_i}$ to $ E^1_{r,d}$ has dimension $hh_i(r'_id'-r'd_i')=r_id-rd_i\ge 0$,  so it is  non-empty.
We can choose the inclusions from $E^1_{r_i,d_i}$ into $E_{r_k,d_k} $ so that the image does not coincide with any of the finite number of destabilizing subsheaves of $E_{r_k,d_k}$
(it is enough to make sure that none of the morphisms $F^j_i$ to $ F^{j'}_k$ is zero).

We now describe a vector bundle on the chain by giving a vector bundle on each component and the gluing at the nodes:

On the curve $C^1$ take the vector bundle $E^1_{r_k,d_k}=E^1_{r,d}$ we just described.
On the curves $C_2,\dots, C_g$, choose direct sum of $r$ line bundles of degree 0.
On each of $C_2,\dots, C_g$, select a first set of $r_1$ among the $r$ line bundles in the direct sum.
 Select then a second set of $r_2$ among the $r$ line bundles containing the initial subset of $r_1$ already chosen,
 Select  a third set of $r_3$  line bundles containing the  subset of $r_2$ chosen in the previous step and so on.
 Form now a bundle on the chain of elliptic curves by gluing the bundles on each component so that 
 when identifying $Q_i$ with $P_{i+1}, i=2,\dots, g-1$ each of the sets of  $r_j$ line bundles $j=1,\dots, k$ on $C_i$ glues with the set of  $r_j$ line bundles on $C_{i+1}$, $j=1,\dots, k$
  (but the gluings are otherwise generic).  
At $Q_1$, glue each set of  $r_j$ line subbundles on $C_2$   with the fiber of the image of the  $E^1_{r_j,d_j}$ (but the gluings are otherwise generic).
 In this way, we obtain bundles  of ranks $r_1<r_2<\dots<r_k$  and degrees $d_1,\dots, d_k$ on the whole curve $C$ each contained in the next.
 
On a reducible nodal curve, gluing  vector bundles that are semistable  on each of the components and of the degrees allowed by the polarization,
 one obtains a semistable bundle on the whole curve.
Moreover, if one of the bundles we are gluing is stable or if none of the subbundles that contradict stability glue with each other, the whole vector bundle on the reducible curve is stable
(see \cite{modredcur}, \cite{modtree}).

By construction, the vector bundles on each $C_i$ are semistable.
On $C_1$, the only subbundles of the bundle $E^1_{r_k,d_k}=\oplus _{j=1}^hE^j_{r',d'}=\oplus _{j=1}^hF^j_k$  that contradict stability 
are the $h$ subsheaves $F^j_k$  of degree $d'$ and rank $r'$ and their direct sums.
Our choice of the inclusions of the subbundles in the bundle on $C_1$ and the gluings at the nodes guarantee that we have a stable overall bundle. 

Note also that our choice of $d_i$ means that  $r_id_{i+1}-r_{i+1}(d_i+1)\le -1$ or equivalently $r_id_{i+1}-r_{i+1}d_i\le r_{i+1}-1$ 
In the interval, $1\le r_i\le r_{i+1}-1$, this implies that $r_id_{i+1}-r_{i+1}d_i\le r_i( r_{i+1}-r_i)$.
Therefore, given a subspace of dimension $r_i$ of the fiber of  $E^1_{r_{i+1},d_{i+1}}$ at $Q_1$, there is at most a finite number of subbundles $E^1_{r_i,d_i}$ whose immersion in 
$E^1_{r_{i+1},d_{i+1}}$ glue with that fixed subspace (see Proposition 2.8 of \cite{langred}).
Therefore, the number of chains for a fixed $E_{r,d}$ on the reducible curve is finite.
 \end{proof}

%%%%%%%%%%%%%%%%%%%%%%%%%%%%%%%%%%%%%%%%%%%%%%%%%%%%%%%%%%%%%%%%%%%%%%%%%%%%%%%%%%%%

\section{Extending the result to the non-singular curve.}

We start by using the results on the reducible curve to extend it to a generic, non-singular curve
\begin{proposition}\label{propgencur}
Let $C$ be a generic curve of genus $g$.
 Fix a rank $r$ and degree $d, 0\le d<r$ and a collection of integers $r_1<r_2<\dots<r_k=r$. 
 Choose  degrees $d_1,\dots, d_k=d$ with $d_{k-1}$  the largest degree such that $\frac {d_{k-1}}{r_{k-1}}<\frac {d_k}{r_k}$,
  $d_{k-2}$ is the largest degree such that $\frac {d_{k-2}}{r_{k-2}}<\frac {d_{k-1}}{r_{k-1}}$,\dots 
   $d_{1}$  the largest degree such that $\frac {d_{1}}{r_{1}}<\frac {d_2}{r_2}$.
 Then,  there exists a stable bundle $E$  that contain a chain of subbundles 
 \[ E_{r_1,d_1}\subseteq  E_{r_2,d_2}\subseteq \dots \subseteq  E_{r_k,d_k}=E\]
  with $E_{r_i,d_i}$ stable of degree $d_i$ and rank $r_i$.
 \end{proposition}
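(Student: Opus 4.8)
The plan is to transport the chain produced on the chain of elliptic curves in Proposition~\ref{propredcur} to a generic smooth curve by a one-parameter degeneration, in the spirit of \cite{extlb},\cite{langred}. First I would fix a flat family $\pi\colon\mathcal{C}\to B$ over a smooth affine curve $B$, with smooth total space, whose special fibre $\mathcal{C}_0$ is the chain of elliptic curves of Proposition~\ref{propredcur} and whose general fibre is a generic smooth curve of genus $g$; such a family exists because the chain of elliptic curves lies in the closure of the locus of generic curves in $\overline{M}_g$, and the general fibre may be taken outside the single proper subvariety of $M_g$ off of which the assertion could fail, so it is enough to realize the desired bundle on the generic fibre $\mathcal{C}_\eta$. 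By Proposition~\ref{propredcur} I fix on $\mathcal{C}_0$ a stable bundle $E_0$ of rank $r$ and degree $d$ carrying a chain $E_{r_1,d_1}\subset\cdots\subset E_{r_k,d_k}=E_0$ of stable subbundles of the prescribed numerical type, with only finitely many such chains inside $E_0$.

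Next, I would extend $E_0$ over the family. Working over $R=\widehat{\mathcal{O}}_{B,0}$ and the thickenings $\mathcal{C}_n=\mathcal{C}\times_B\operatorname{Spec}(R/\mathfrak{m}^{n+1})$, the obstruction to lifting $E_0$ from $\mathcal{C}_n$ to $\mathcal{C}_{n+1}$ lies in $\operatorname{Ext}^2_{\mathcal{C}_0}(E_0,E_0)$, which vanishes since $\mathcal{C}_0$ is a curve. Hence $E_0$ lifts to every $\mathcal{C}_n$, and by Grothendieck's existence theorem followed by a standard approximation argument it lifts to a bundle $\mathcal{E}$ on $\mathcal{C}|_U$ for a Zariski neighbourhood $U\ni 0$, of rank $r$ and degree $d$ on every fibre and restricting to $E_0$ on $\mathcal{C}_0$; after shrinking $U$ the restriction $\mathcal{E}_b$ is stable for all $b\in U$ by openness of stability, and the same argument extends each quotient $E_0/E_{r_i,d_i}$.

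The heart of the matter — and the step I expect to be the main obstacle — is to make the whole chain, not just $\mathcal{E}$, survive the specialisation. One cannot simply extend the subbundles one at a time inside a fixed $\mathcal{E}$: the obstruction to lifting $E_{r_i,d_i}$, equivalently the point $[E_0\twoheadrightarrow E_0/E_{r_i,d_i}]$ of the relevant relative Quot scheme, from $\mathcal{C}_n$ to $\mathcal{C}_{n+1}$ lies in $\operatorname{Ext}^1_{\mathcal{C}_0}\!\big(E_{r_i,d_i},E_0/E_{r_i,d_i}\big)\otimes(\mathfrak{m}^n/\mathfrak{m}^{n+1})$, and the hypothesis $r_{i-1}d_i-r_id_{i-1}\le r_{i-1}(r_i-r_{i-1})(g-1)$ forces $\chi\!\big(E_{r_i,d_i}^{\vee}\otimes(E_0/E_{r_i,d_i})\big)\le 0$, so this $\operatorname{Ext}^1$ is in general nonzero. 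Instead I would argue globally with the relative flag scheme $\mathcal{F}\to B$ whose fibre over $b$ parametrizes a stable bundle on $\mathcal{C}_b$ together with a chain of stable subbundles of the given ranks and degrees. Then $\mathcal{F}$ is proper over $B$ and $p=[E_0,\text{chain}]\in\mathcal{F}_0$, and it suffices to show that some component of $\mathcal{F}$ through $p$ dominates $B$: such a component then surjects onto $B$ ($B$ being a curve and $\mathcal{F}\to B$ proper), so in particular $\mathcal{F}_\eta\neq\varnothing$. I would deduce the domination from two dimension estimates at $p$. On one hand, the deformation theory of triples (point of $B$, bundle on $\mathcal{C}_b$, chain of subbundles) is controlled by cohomology in degrees $0$ and $1$ only, since $\mathcal{C}_0$ is a curve, so every component of $\mathcal{F}$ through $p$ has dimension at least the expected value $\big(r^2(g-1)+1-\mathfrak{c}\big)+1$, where $\mathfrak{c}$ is the codimension of Theorem~\ref{mainth} and the extra $1$ accounts for $B$. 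On the other hand, $\dim_p\mathcal{F}_0\le r^2(g-1)+1-\mathfrak{c}$: by the finiteness of chains inside a fixed $E_0$ from Proposition~\ref{propredcur}, $\dim_p\mathcal{F}_0$ equals the dimension of the locus of such $E_0$ in $\mathcal{U}(r,d)(\mathcal{C}_0)$, and counting parameters on $\mathcal{C}_0$ as in the proof of Proposition~\ref{propredcur} — the bundle with its chain of inclusions on the elliptic component $C_1$ varies with no excess, by the rigidity of homomorphisms between semistable bundles of distinct slopes on an elliptic curve, while the flag data at each of the $g-1$ nodes varies in a space of the expected dimension because the admissible gluings there are finite in number (Proposition~2.8 of \cite{langred}) — yields this bound. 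As the second quantity is strictly smaller than the first, no component of $\mathcal{F}$ through $p$ can be contained in $\mathcal{F}_0$, which gives the domination.

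It then only remains to read off the statement: $\mathcal{F}_\eta\neq\varnothing$ means that the generic fibre $\mathcal{C}_\eta$ — a generic smooth curve of genus $g$ — carries a stable bundle $E$ of rank $r$ and degree $d$ together with a chain of stable subbundles $E_{r_1,d_1}\subset\cdots\subset E_{r_k,d_k}=E$ of the prescribed ranks and degrees, which is exactly the assertion. As indicated, the delicate point is the upper bound on $\dim_p\mathcal{F}_0$; it is here that the explicit structure on the chain of elliptic curves set up in Section~1 is essential, and essentially the same count will afterwards deliver the codimension formula of Theorem~\ref{mainth}.
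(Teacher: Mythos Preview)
Your argument can be made to work, but it is far more elaborate than what the paper does, and it takes a detour around the easy route. The paper's entire proof of Proposition~\ref{propgencur} is two lines: take a one-parameter family degenerating a generic smooth curve to the chain of elliptic curves; Proposition~\ref{propredcur} furnishes on the special fibre a chain $E_{r_1,d_1}\subset\cdots\subset E_{r_k,d_k}$ with every $E_{r_i,d_i}$ stable; now invoke \emph{openness of stability}.

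What makes this two-line argument run is the point you bypass. You first deform $E_0$ to a fixed $\mathcal{E}$ and then try to deform the chain \emph{inside} $\mathcal{E}$; as you correctly observe, the Quot-scheme obstruction $\operatorname{Ext}^1(E_{r_i,d_i},E_0/E_{r_i,d_i})$ need not vanish, and you are forced into the global flag-scheme dimension comparison. The paper instead (implicitly) parametrises the chain from the bottom up as a tower of extension data
\[
(E_{r_1,d_1},\ \bar E_{r_2-r_1,d_2-d_1},\ e_2,\ \bar E_{r_3-r_2,d_3-d_2},\ e_3,\ \ldots),\qquad e_i\in\mathbb{P}\,H^1(\bar E_i^{*}\otimes E_{r_{i-1},d_{i-1}}),
\]
over the family $\mathcal{C}\to B$. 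Near the special point supplied by Proposition~\ref{propredcur} all the $E_{r_{i-1},d_{i-1}}$ and $\bar E_i$ are stable, hence (exactly the computation in Proposition~\ref{propIrrFamExt}) $h^0(\bar E_i^{*}\otimes E_{r_{i-1},d_{i-1}})=0$ and $h^1$ is constant. So this relative parameter space is, locally near the special point, an iterated projective bundle over a product of relative moduli spaces, in particular \emph{smooth over $B$}. The open locus where the resulting $E=E_{r_k,d_k}$ (and the intermediate $E_{r_i,d_i}$) are stable is non-empty on the special fibre, and a non-empty open set in something smooth over a curve automatically dominates the base. No obstruction analysis, no dimension inequality is needed.

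Two smaller remarks on your write-up. First, your $\mathcal{F}$ is \emph{not} proper over $B$: you have imposed stability of the ambient bundle and of all subbundles, and these are open, not closed, conditions. You do not actually use properness (you argue domination via dimensions instead), so this is a harmless slip, but it should be deleted. Second, your upper bound $\dim_p\mathcal{F}_0\le r^2(g-1)+1-\mathfrak{c}$ is not a consequence of Proposition~\ref{propredcur} as stated; the finiteness there only identifies $\dim_p\mathcal{F}_0$ with the dimension of the image in $\mathcal{U}(r,d)(\mathcal{C}_0)$, and you would still have to carry out the full parameter count on the chain of elliptic curves. That count is doable, but it is precisely the work that the smoothness argument above renders unnecessary.
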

 \begin{proof} Take a family of curves where the special fiber is a chain of elliptic curves and the generic curve is non-singular.
 Then, the result follows from Proposition \ref{propredcur}  using the openness of the stability condition.
 \end{proof}.
 
 We proved stability of the various steps of a chain of extensions under the harder conditions on slopes. 
 This implies the similar result when the slopes are not as close:
 \begin{proposition}\label{propgencuranydeg}
Let $C$ be a generic curve of genus $g$.
 Fix a rank $r$ and degree $d, 0\le d<r$ and two  collections of integers $r_1<r_2<\dots<r_k=r$,  $d_1,\dots, d_k=d$ 
such that  
 \[ \frac {d_1}{r_1}< \frac {d_2}{r_2}<\dots<\frac {d_k}{r_k}\]
 Then,  there exists a stable bundle $E$  that contain a chain of subbundles 
 \[ E_{r_1,d_1}\subseteq  E_{r_2,d_2}\subseteq \dots \subseteq  E_{r_k,d_k}=E\]
  with $E_{r_i,d_i}$ stable of degree $d_i$ and rank $r_i$.
 \end{proposition}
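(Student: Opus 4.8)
The plan is to deduce the statement from Proposition~\ref{propgencur}. Put $\bar d_k=d$ and let $\bar d_{k-1},\dots,\bar d_1$ be the maximal degrees of that proposition attached to the ranks $r_1<\dots<r_k$ (so $\bar d_{k-1}$ is largest with $\bar d_{k-1}/r_{k-1}<d/r$, then $\bar d_{k-2}$ is largest with $\bar d_{k-2}/r_{k-2}<\bar d_{k-1}/r_{k-1}$, and so on). A descending induction on $i$, using the hypotheses $d_i/r_i<d_{i+1}/r_{i+1}$ together with the maximality of the $\bar d_i$, shows that $d_i\le\bar d_i$ for every $i$. By Proposition~\ref{propgencur} there is a stable bundle $E$ on $C$ carrying a chain $F_{r_1,\bar d_1}\subseteq F_{r_2,\bar d_2}\subseteq\dots\subseteq F_{r_k,\bar d_k}=E$ of stable subbundles, and the idea is to replace the terms $F_{r_1,\bar d_1},\dots,F_{r_{k-1},\bar d_{k-1}}$ one at a time, from the bottom up, by stable subbundles of the prescribed degrees $d_1,\dots,d_{k-1}$, leaving the top term $E$ unchanged.

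Suppose, adopting the convention $r_0=0$, $d_0=0$, $G_{r_0,d_0}=0$, that we have already produced stable subbundles $G_{r_0,d_0}\subseteq G_{r_1,d_1}\subseteq\dots\subseteq G_{r_{j-1},d_{j-1}}$ of $E$ of the prescribed degrees, contained in $F_{r_j,\bar d_j}\subseteq F_{r_{j+1},\bar d_{j+1}}\subseteq\dots\subseteq E$. Consider $H:=F_{r_{j+1},\bar d_{j+1}}/G_{r_{j-1},d_{j-1}}$, of rank $r_{j+1}-r_{j-1}$ and degree $\bar d_{j+1}-d_{j-1}$. Giving a rank-$r_j$ subbundle of $F_{r_{j+1},\bar d_{j+1}}$ that contains $G_{r_{j-1},d_{j-1}}$ and has degree $d_j$ is the same as giving a rank-$(r_j-r_{j-1})$ subbundle of $H$ of degree $d_j-d_{j-1}$. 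Since $H$ already contains the rank-$(r_j-r_{j-1})$ subbundle $F_{r_j,\bar d_j}/G_{r_{j-1},d_{j-1}}$ of the larger degree $\bar d_j-d_{j-1}$, the numerical conditions needed to invoke the single-extension theorem \cite{RT} for $H$ hold; it provides a stable rank-$(r_j-r_{j-1})$ subbundle of $H$ of degree $d_j-d_{j-1}$, which can be chosen so that its preimage in $F_{r_{j+1},\bar d_{j+1}}$ meets none of the finitely many subsheaves that would destabilize it. Pulling back gives a stable $G_{r_j,d_j}$ with $G_{r_{j-1},d_{j-1}}\subseteq G_{r_j,d_j}\subseteq F_{r_{j+1},\bar d_{j+1}}$, all inclusions saturated; in particular $G_{r_j,d_j}$ is a subbundle of $E$, because a composition of saturated inclusions is saturated. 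Replacing $F_{r_j,\bar d_j}$ by $G_{r_j,d_j}$ advances the induction, and after the $(k-1)$-st step we have a chain of stable subbundles of $E$ with all the required ranks and degrees.

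The point that needs real care---the main obstacle---is genericity. Applying \cite{RT} to $H$ requires $H$ to be a sufficiently general stable bundle of its numerical type, and one must likewise guarantee that the subbundles $G_{r_j,d_j}$ produced along the way remain general enough for the next quotient to be general; in particular one has to be able to arrange, at each stage, that $H$ is itself stable. I would handle this by propagating, at every stage, the information that the construction stays inside a nonempty open subset of the relevant moduli space, using the irreducibility of the loci of stable bundles admitting a subbundle of prescribed rank and degree---which for a single step is part of \cite{RT}---together with an expected-dimension count for the relevant Quot schemes. Alternatively, and with no genericity worry at all, the whole statement follows by running the construction in the proof of Proposition~\ref{propredcur} on the chain of elliptic curves with the given degrees $d_i$ in place of the maximal ones: on $C_1$ one still takes $E^1_{r_i,d_i}$ to be a direct sum of $\gcd(r_i,d_i)$ stable indecomposables of coprime rank and degree, the nested inclusions $E^1_{r_i,d_i}\subseteq E^1_{r_{i+1},d_{i+1}}$ still exist since $\dim\mathrm{Hom}(E^1_{r_i,d_i},E^1_{r,d})=r_id-rd_i>0$, and gluing along the chain exactly as before produces a stable bundle with a chain of stable subbundles of ranks $r_i$ and degrees $d_i$; only the finiteness of the number of chains, which used maximality and is not claimed here, is lost. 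Passing to the generic curve by openness of stability, as in the proof of Proposition~\ref{propgencur}, then completes the argument.
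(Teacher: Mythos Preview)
Your first approach has exactly the gap you acknowledge: invoking \cite{RT} on the quotient $H=F_{r_{j+1},\bar d_{j+1}}/G_{r_{j-1},d_{j-1}}$ requires $H$ to be a general stable bundle of its numerical type, and nothing in the construction forces this; the sketch you give for propagating genericity is not an argument, and filling it in would be substantial work.

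Your second approach, however, is correct and is a genuinely different route from the paper's. You observe that the construction in Proposition~\ref{propredcur} on the chain of elliptic curves uses the maximality of the $d_i$ only in its final paragraph, for the finiteness of chains; the existence of the nested inclusions on $C_1$ (\cite{langred}, Lemma~2.5) and the stability of the glued bundles need only the slope inequalities $d_i/r_i<d_{i+1}/r_{i+1}$, so running the same construction with the prescribed $d_i$ already produces a stable $E$ with the required chain on the reducible curve, and one then passes to the generic curve by openness of stability. The paper instead reduces directly to Proposition~\ref{propgencur} in two lines, quoting Corollary~1.12 of \cite{RT}: the locus of $E$ containing a rank-$r_1$ subbundle of degree $d_1-1$ lies in the closure of the locus of those containing one of degree $d_1$, and iterating this closure relation connects the maximal degrees $\bar d_i$ to the given $d_i$. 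The paper's argument is shorter and offloads the work to a ready-made closure statement; your second approach is more self-contained (it does not need \cite{RT}) but amounts to noticing that Proposition~\ref{propredcur} was in effect already proved in the generality needed here.
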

 \begin{proof} Fix integers $r, d, r_1, d_1$ with $ \frac {d_1}{r_1}< \frac {d}{r}$.
  The set of vector bundles of rank $r$ and degree $d$ which contain a subbundle of rank $ r_1$ and degree $ d_1-1$ is contained in the closure of those vector bundles that contain a 
  subbundle of rank $ r_1$ and degree $ d_1$ (see \cite{RT} Corollary 1.12)
  Therefore, the result follows from \ref{propgencur}.
  \end{proof}.
 
Let us now look at dimension and irreducibility:

\begin{proposition} \label{propIrrFamExt} Fix integers $d_1, d_2, r_1, r_2$ such that $\frac {d_1}{r_1}<\frac {d_2}{r_2}$.
Let ${\mathcal U}_1$ be an irreducible family of stable vector bundles of rank $r_1$ and degree $d_1$.
Let $\bar{{\mathcal U}_2}$ be an irreducible family of stable vector bundles of rank $r_2-r_1$ and degree $d_2-d_1$.
Then, the family of   extensions 
\[ 0\to E_1\to E\to \bar{E_2}\to 0, \  E_1\in {\mathcal U}_1, \bar{E_2}\in \bar{{\mathcal U}_2} \]
is also irreducible of dimension 
\[ \dim ( {\mathcal U}_1)+\dim ( \bar{{\mathcal U}_2})+r_1(r_2-r_1)(g-1)+r_1d_2-r_2d_1-1 \]. 
\end{proposition}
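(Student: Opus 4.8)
The plan is to parametrise these extensions by a projective bundle over the irreducible base $\mathcal{U}_1\times\bar{\mathcal{U}_2}$ and read off irreducibility and dimension. Over the smooth curve $C$, with $E_1$ and $\bar{E_2}$ locally free, an extension $0\to E_1\to E\to\bar{E_2}\to 0$ is classified by a class in $\operatorname{Ext}^1(\bar{E_2},E_1)\cong H^1\!\big(C,\mathcal{H}om(\bar{E_2},E_1)\big)=H^1(C,\bar{E_2}^{\vee}\otimes E_1)$, and proportional classes yield isomorphic bundles $E$. So the bundles arising as an extension of a fixed $\bar{E_2}$ by a fixed $E_1$ are parametrised by $\mathbb{P}\operatorname{Ext}^1(\bar{E_2},E_1)$, and the whole family of extensions is the relative version of this construction over $\mathcal{U}_1\times\bar{\mathcal{U}_2}$.

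The key point, on which the dimension count hinges, is that $\dim\operatorname{Ext}^1(\bar{E_2},E_1)$ is independent of the pair $(E_1,\bar{E_2})$. First, the hypothesis $\frac{d_1}{r_1}<\frac{d_2}{r_2}$ forces $\mu(\bar{E_2})=\frac{d_2-d_1}{r_2-r_1}>\frac{d_2}{r_2}>\frac{d_1}{r_1}=\mu(E_1)$, since $\frac{d_2}{r_2}$ is a weighted mean of the two outer slopes. As $E_1$ and $\bar{E_2}$ are stable, hence semistable, and $\mu(\bar{E_2})>\mu(E_1)$, any nonzero $f\colon\bar{E_2}\to E_1$ would have image which is at once a quotient of $\bar{E_2}$, of slope $\ge\mu(\bar{E_2})$, and a subsheaf of $E_1$, of slope $\le\mu(E_1)$; this is impossible, so $\operatorname{Hom}(\bar{E_2},E_1)=0$. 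Riemann--Roch on $C$, with $\operatorname{rk}(\bar{E_2}^{\vee}\otimes E_1)=r_1(r_2-r_1)$ and $\deg(\bar{E_2}^{\vee}\otimes E_1)=r_2 d_1-r_1 d_2$, then gives
\[
\dim\operatorname{Ext}^1(\bar{E_2},E_1)=-\chi\big(\bar{E_2}^{\vee}\otimes E_1\big)=r_1 d_2-r_2 d_1+r_1(r_2-r_1)(g-1)=:N ,
\]
and $N>0$ by the slope inequality, so such extensions exist.

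Finally I would globalise. Working \'etale-locally on $\mathcal{U}_1\times\bar{\mathcal{U}_2}$ there is a universal pair of bundles (stable bundles being simple), and by cohomology and base change --- $h^1$ being the constant $N$ --- the relative Ext sheaf is locally free of rank $N$; twisting the universal bundles by line bundles pulled back from the base leaves its projectivisation unchanged, so the projectivised relative Ext bundles glue to a projective bundle with fibre $\mathbb{P}^{N-1}$ over $\mathcal{U}_1\times\bar{\mathcal{U}_2}$, which is the family of extensions. A projective bundle over an irreducible variety is irreducible, so the family is irreducible of dimension $\dim\mathcal{U}_1+\dim\bar{\mathcal{U}_2}+N-1$, the asserted value. (One may also bypass universal families and argue directly that the incidence set of triples $(E_1,\bar{E_2},[e])$ is irreducible, being dominated by $\mathcal{U}_1\times\bar{\mathcal{U}_2}$ with all fibres irreducible of the constant dimension $N-1$.)

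I expect the only genuine obstacle to be the constancy of $\dim\operatorname{Ext}^1$, which rests on the vanishing $\operatorname{Hom}(\bar{E_2},E_1)=0$ extracted from the slope comparison; once that is in hand, the rest is the standard projective-bundle-of-extensions bookkeeping, the one mild technical nuisance being that $\mathcal{U}_1$ and $\bar{\mathcal{U}_2}$ need only carry universal families \'etale-locally, which is harmless for irreducibility and dimension.
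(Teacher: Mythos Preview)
Your proof is correct and follows essentially the same approach as the paper: the core step in both is the vanishing $\operatorname{Hom}(\bar{E_2},E_1)=0$ via the slope inequality and stability, after which Riemann--Roch gives the constant dimension of $\operatorname{Ext}^1$. In fact you are more thorough than the paper, which stops at ``the statement about the dimension follows'' without explicitly addressing the projectivisation (accounting for the $-1$) or the globalisation needed for irreducibility.
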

\begin{proof} For fixed $E_1,  \bar{E_2}$, the space of extensions as above is parameterized by $H^1( \bar{E_2}^*\otimes E_1)$.
We claim that  $H^0( \bar{E_2}^*\otimes E_1)= 0$. If this were not the case then there would be a non-zero  morphism of $ \bar{E_2}\to E_1$.
Its image $I$ would be  both a quotient of $ \bar{E_2}$ and a subsheaf of $ E_1$.
The stability of the two bundles implies that 
\[ \frac {d_1}{r_1}=\mu(E_1)<\mu (I)<\mu(\bar{E_2})=\frac {d_2-d_1}{r_2-r_1}\]
This contradicts the assumption of our initial choice of ranks and degrees.
It follows that $H^0( \bar{E_2}^*\otimes E_1)= 0$ and therefore $H^1( \bar{E_2}^*\otimes E_1)$ has dimension $r_1d_2-r_2d_1+r_1(r_2-r_1)(g-1)$, irrespectively of the choice of $E_1, \bar{E_2}$.
Then the statement about the dimension follows.
\end{proof}

\begin{proof}(of Theorem \ref{mainth}).
Take ${\mathcal U}_1$  the space of all vector bundles of degree $d_1$ and rank $r_1$, 
$\bar{\mathcal U}_2$  the space of all vector bundles of degree $d_2-d_1$ and rank $r_2-r_1$, \dots 
$\bar{\mathcal U}_k$  the space of all vector bundles of degree $d_k-d_{k-1}$ and rank $r_k-r_{k-1}$.
From  Proposition \ref{propgencur}, the set of {\bf stable} bundles $E$ that can be obtained by successive extensions is non-empty.
Proposition \ref{propIrrFamExt} allows us to compute successively the dimensions of the space of extensions,
starting with  
\[ \dim ({\mathcal U}_1)=r_1^2(g-1)+1, \dim (\bar{\mathcal U}_2)=(r_2-r_1)^2(g-1)+1,\dots,  \dim (\bar{\mathcal U}_k)=(r_k-r_{k-1})^2(g-1)+1\]
The last claim in Proposition \ref{propredcur} ensures that each vector bundle appears only a finite number of times as an extension of the given form.
\end{proof}

\bigskip

\

\bigskip

\bigskip

\end{document}